 \numberwithin{equation}{section}
\theoremstyle{plain}
\newtheorem{thm}{Theorem}[section]
\newtheorem{cor}[thm]{Corollary}
\newtheorem{lem}[thm]{Lemma}
\newtheorem{prop}[thm]{Proposition}
\theoremstyle{definition}
\theoremstyle{remark}
\newtheorem{rem}[thm]{Remark}
\newcommand{\N}{\mathbb{N}}
\newcommand{\R}{\mathbb{R}}
\newcommand{\I}{\infty}
\newcommand{\bp}{\begin{proof}[\ensuremath{\mathbf{Proof}}]}
\newcommand{\ep}{\end{proof}}
\begin{document}


\title{Inverse iteration for $p$-ground states}

\author{Ryan Hynd\footnote{Department of Mathematics, University of Pennsylvania.  Partially supported by NSF grant DMS-1301628.}\; and Erik Lindgren\footnote{Supported by the Swedish Research Council, grant no. 2012-3124.}}  

\maketitle

\begin{abstract}
We adapt the inverse iteration method for symmetric matrices to some nonlinear PDE eigenvalue problems. In particular, for $p\in (1,\infty)$ and a given domain $\Omega\subset\R^n$, we analyze
a scheme that allows us to approximate the smallest value the ratio $\int_\Omega|D\psi|^pdx/\int_\Omega|\psi|^pdx$ can assume for functions $\psi$ that vanish on $\partial \Omega$.  The scheme in question 
also provides a natural way to approximate minimizing $\psi$.  Our analysis also extends in the limit as $p\rightarrow\infty$ and thereby fashions a new approximation method for ground states of the infinity Laplacian. 
\end{abstract}

\section{Introduction}
\par In this paper, we will use a generalization of the inverse iteration method for symmetric matrices to estimate solutions of some nonlinear PDE eigenvalue problems.  The first problem we consider is as follows. For $p\in (1,\infty)$ and a bounded domain $\Omega\subset \R^n$, we define
\begin{equation}\label{LamPee}
\lambda_p:=\inf\left\{\frac{\int_{\Omega}|D\psi|^pdx }{\int_{\Omega}|\psi|^pdx }: \psi\in W^{1,p}_0(\Omega),\; \psi\not\equiv 0\right\}.
\end{equation}
Here $W^{1,p}_0(\Omega)$ is the closure of the smooth, compactly supported functions $\phi:\Omega\rightarrow \R$ in the norm $\left(\int_\Omega|D\phi|^pdx\right)^{1/p}$; we refer 
readers to the sources \cite{Evans, Mazya} for information on Sobolev spaces and their applications to PDE .  It is evident that $1/\lambda_p$ is the smallest constant $C$ for which 
the {\it Poincar\'{e} inequality} 
$$
\int_{\Omega}|\psi|^pdx\le C \int_{\Omega}|D\psi|^pdx, \quad\psi\in W^{1,p}_0(\Omega)
$$
holds. 

\par The constant $\lambda_p$ is also a type of eigenvalue.  Indeed, minimizers in \eqref{LamPee} are called {\it $p$-ground states} and satisfy the PDE
$$
\begin{cases}
-\Delta_pu=\lambda_p|u|^{p-2}u, \quad & x\in \Omega,\\
\hspace{.34in} u=0, \quad &x\in \partial \Omega.
\end{cases}
$$
Here, the operator $\Delta_p\psi:=\text{div}(|D\psi|^{p-2}D\psi)$ is known as the $p$-Laplacian.  It has been established that $p$-ground states exist and that any two are 
multiples of one another, see \cite{LindKaw, Saka}.  Consequently, $\lambda_p$ is said to be {\it simple}. 

\par We will use the following iteration scheme to approximate $\lambda_p$ and $p$-ground states. Let $u_0\in L^p(\Omega)$, and consider the family of PDE
\begin{equation}\label{IterationScheme}
\begin{cases}
-\Delta_pu_k=|u_{k-1}|^{p-2}u_{k-1}, \quad & x\in \Omega\\
\hspace{.34in} u_k=0, \quad &x\in \partial \Omega
\end{cases}
\end{equation}
for $k\in \N$.  It can be verified without too much difficulty that for a given $u_0$, there is a unique weak solution sequence $(u_k)_{k\in \N}\subset W^{1,p}_0(\Omega)$ of \eqref{IterationScheme}.  That is, there is a unique sequence $(u_k)_{k\in \N}\subset W^{1,p}_0(\Omega)$ such that 
$$
\int_\Omega |Du_k|^{p-2}Du_k\cdot D\phi dx=\int_\Omega |u_{k-1}|^{p-2}u_{k-1}\phi dx
$$
for each $\phi \in W^{1,p}_0(\Omega)$ and $k\in \N$. In fact, once $u_{k-1}\in L^p(\Omega)$ is known, $u_k$ can be obtained by minimizing the functional 
$$
W^{1,p}_0(\Omega)\ni v\mapsto \int_{\Omega}\left(\frac{1}{p}|Dv|^p - |u_{k-1}|^{p-2}u_{k-1}v\right)dx.
$$ 
As this functional is strictly convex and coercive, the existence of a unique minimizer follows from the ``direct method" of the calculus of variations. 

\par The following theorem details how the scheme \eqref{IterationScheme} is related to $\lambda_p$ and $p$-ground states. 

\begin{thm}\label{Thm1}
Assume $u_0\in L^p(\Omega)$ and define
$$
\mu_p:=\lambda^{\frac{1}{p-1}}_p.
$$
Then the limit 
$$
\psi:=\lim_{k\rightarrow \infty}\mu_p^{k}u_k
$$
exists in $W^{1,p}_0(\Omega)$. If $\psi \not\equiv 0$, then $\psi$ is a $p$-ground state and 
\begin{equation}\label{LamPeeLimit}
\lambda_p=\lim_{k\rightarrow\infty}\frac{\int_{\Omega}|Du_k|^pdx }{\int_{\Omega}|u_k|^pdx }.
\end{equation}
\end{thm}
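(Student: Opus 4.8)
The plan is to rescale the iterates and track two monotone energies. Set $v_k:=\mu_p^k u_k$; then \eqref{IterationScheme} together with $\mu_p=\lambda_p^{1/(p-1)}$ turns the scheme into the renormalized iteration $-\Delta_p v_k=\lambda_p|v_{k-1}|^{p-2}v_{k-1}$ in $\Omega$, $v_k=0$ on $\partial\Omega$, with $v_0=u_0$. Writing $E_k:=\int_\Omega|Dv_k|^p\,dx$ and $F_k:=\int_\Omega|v_k|^p\,dx$, the three assertions become: $(v_k)$ converges in $W^{1,p}_0(\Omega)$; $E_k/F_k\to\lambda_p$ once the limit is nonzero (the rescaling cancels in this ratio, so this is \eqref{LamPeeLimit}); and a nonzero limit is a $p$-ground state.

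First I would extract the a priori bounds. Testing the equation for $v_k$ against $v_k$ gives, by H\"older, $E_k=\lambda_p\int_\Omega|v_{k-1}|^{p-2}v_{k-1}v_k\,dx\le\lambda_p F_{k-1}^{(p-1)/p}F_k^{1/p}$, while \eqref{LamPee} gives $\lambda_p F_k\le E_k$. Chaining these forces $F_k\le F_{k-1}$, so $F_k$ decreases to some $F_\infty\ge0$; then $E_k\le\lambda_p F_{k-1}\le\lambda_p\|u_0\|_{L^p}^p$, so $(v_k)$ is bounded in $W^{1,p}_0(\Omega)$, and the squeeze $\lambda_p F_k\le E_k\le\lambda_p F_{k-1}^{(p-1)/p}F_k^{1/p}$ pins down $E_k\to\lambda_p F_\infty$. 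If $F_\infty=0$ this already yields $v_k\to0$ in $W^{1,p}_0(\Omega)$, so I may assume $F_\infty>0$, and then $E_k/F_k\to\lambda_p$, which is \eqref{LamPeeLimit}. (One can also verify that $E_k/F_k$ is itself nonincreasing by comparing the functional $v\mapsto\int_\Omega(\frac1p|Dv|^p-\lambda_p|v_{k-1}|^{p-2}v_{k-1}v)\,dx$, minimized by $v_k$, against multiples of $v_{k-1}$, but this refinement is not needed.)

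Next I would identify the subsequential limits. By reflexivity and the compact embedding $W^{1,p}_0(\Omega)\hookrightarrow L^p(\Omega)$, any subsequence of $(v_k)$ has a further one with $v_{k_j}\rightharpoonup v$ in $W^{1,p}_0(\Omega)$ and $v_{k_j}\to v$ in $L^p(\Omega)$; then $\int_\Omega|v|^p\,dx=F_\infty>0$ and, by weak lower semicontinuity, $\int_\Omega|Dv|^p\,dx\le\lim E_{k_j}=\lambda_p F_\infty$. Comparing with \eqref{LamPee} forces equality, so $v$ is a $p$-ground state and $\|Dv_{k_j}\|_{L^p}\to\|Dv\|_{L^p}$; together with $Dv_{k_j}\rightharpoonup Dv$ and uniform convexity of $L^p(\Omega;\R^n)$ — the one place where $1<p<\infty$ is essential — this upgrades to $v_{k_j}\to v$ strongly in $W^{1,p}_0(\Omega)$. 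By the cited simplicity of $\lambda_p$, every $p$-ground state is a scalar multiple of a fixed one $\phi_1$, and matching $L^p$-norms shows $v=\pm c_0\phi_1$ with $c_0:=(F_\infty/\int_\Omega|\phi_1|^p\,dx)^{1/p}$. Hence $(v_k)$ is relatively compact in $W^{1,p}_0(\Omega)$ with every subsequential limit in $\{c_0\phi_1,-c_0\phi_1\}$.

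The hard part will be ruling out oscillation between $c_0\phi_1$ and $-c_0\phi_1$. For this I would use that the solution operator $T\colon L^p(\Omega)\to W^{1,p}_0(\Omega)$, sending $g$ to the solution of $-\Delta_p w=|g|^{p-2}g$, is continuous — since $g\mapsto|g|^{p-2}g$ is continuous into the dual of $W^{1,p}_0(\Omega)$ and $-\Delta_p$ is a homeomorphism of $W^{1,p}_0(\Omega)$ onto that dual — so that $\widetilde T:=\mu_p T$ satisfies $v_k=\widetilde T(v_{k-1})$ and fixes each of $\pm c_0\phi_1$ (these being $p$-ground states). Fix $\varepsilon>0$ so that the $\varepsilon$-balls about $c_0\phi_1$ and $-c_0\phi_1$ are disjoint, and $\delta\in(0,\varepsilon]$ from the continuity of $\widetilde T$ at these two fixed points; by relative compactness $\mathrm{dist}(v_k,\{c_0\phi_1,-c_0\phi_1\})<\delta$ for all large $k$. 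If $v_k$ lies in the $\delta$-ball about $c_0\phi_1$, then $v_{k+1}=\widetilde T v_k$ lies in the $\varepsilon$-ball about $c_0\phi_1$, hence — the $\varepsilon$-balls being disjoint and $v_{k+1}$ being $\delta$-close to one of the two points — in the $\delta$-ball about $c_0\phi_1$; by induction all later iterates do. Letting $\varepsilon\to0$ gives $v_k\to c_0\phi_1$ or $v_k\to-c_0\phi_1$ in $W^{1,p}_0(\Omega)$. Thus $\psi:=\lim_k\mu_p^k u_k$ exists in $W^{1,p}_0(\Omega)$; since $v_k\to\psi$ in $L^p(\Omega)$ we have $\|\psi\|_{L^p}^p=F_\infty$, so $\psi\equiv0$ exactly when $F_\infty=0$ and otherwise $\psi$ is a $p$-ground state, with \eqref{LamPeeLimit} already established.
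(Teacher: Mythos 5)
Your proposal is correct, and its skeleton matches the paper's: rescale to $\psi_k=\mu_p^k u_k$, obtain a monotone quantity from H\"older plus the Poincar\'e inequality, extract a subsequence by Rellich--Kondrachov, identify any limit as a $p$-ground state through the energy identity and weak lower semicontinuity, and upgrade to strong $W^{1,p}_0(\Omega)$ convergence via norm convergence of the gradients together with uniform convexity. Two differences are worth recording. First, you run the monotonicity through $F_k=\int_\Omega|v_k|^p\,dx$, while the paper's Lemma \ref{DecreaseW1p} makes the gradient energy $\int_\Omega|D\psi_k|^p\,dx$ nonincreasing and works with its limit $S$; this is essentially cosmetic. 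Second, and substantively, the passage from subsequential to full-sequence convergence: the paper dispatches it in one line (``$S$ is the same for every subsequential limit, so simplicity implies $\psi_k\to\psi$''), which by itself only pins the limits down to the pair $\pm\psi$ and leaves the possible sign oscillation implicit, whereas you rule it out explicitly by a trapping argument around the two fixed points $\pm c_0\phi_1$ of the rescaled solution operator $\widetilde T=\mu_p(-\Delta_p)^{-1}\bigl(|\cdot|^{p-2}\cdot\bigr)$. This buys a complete argument at precisely the thinnest point of the published proof, but at the price of invoking that $-\Delta_p$ is a homeomorphism of $W^{1,p}_0(\Omega)$ onto its dual --- a standard fact, yet nontrivial for $1<p<2$ and nowhere used in the paper; you should cite or prove it. A lighter alternative, available inside your own framework, is the energy identity you already use: since $\int_\Omega|Dv_{k+1}|^p\,dx=\lambda_p\int_\Omega|v_k|^{p-2}v_kv_{k+1}\,dx\to\lambda_pF_\infty>0$, while iterates with $v_k$ near $c_0\phi_1$ and $v_{k+1}$ near $-c_0\phi_1$ would force that integral close to $-F_\infty<0$ (the pairing being continuous on $L^p\times L^p$), the tail of the precompact sequence cannot visit both neighborhoods, and full convergence follows without any operator-continuity machinery.
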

\begin{rem}\label{nonzero}
It may not be obvious how to verify that the limiting function $\psi$ is not identically zero. However, if for instance $u_0>0$ in $\overline \Omega$ or if $u_0\geq 0$ and $\Omega$ is regular enough in order to have a Hopf's lemma (for instance $C^{1,\alpha}$, cf. \cite{MS}), then it is straightforward to verify that $\psi$ is indeed non-zero.
\end{rem}
The iteration scheme \eqref{IterationScheme} was introduced by R. Biezuner, G. Ercole, and E. Martins in \cite{biezuner} who conjectured the limit
\begin{equation}\label{BEMconj}
\lambda_p=\lim_{k\rightarrow\infty}\left(\frac{\int_\Omega|u_{k-1}|^pdx}{\int_\Omega|u_{k}|^pdx}\right)^{1-1/p}.
\end{equation}
We prove this limit holds under the hypotheses of Theorem \ref{Thm1}; see Corollary \ref{BiezCor}.  We also show that the sequences
$$
\left(\frac{\int_{\Omega}|Du_k|^pdx }{\int_{\Omega}|u_k|^pdx }\right)_{k\in \N}\quad \text{and}\quad 
\left(\frac{\int_\Omega|u_{k-1}|^pdx}{\int_\Omega|u_{k}|^pdx}\right)_{k\in \N}
$$
are nonincreasing, which we regard as special features of the the iteration \eqref{IterationScheme}. See Proposition \ref{PropProp} below.  

\par Next, we derive an iteration scheme in the limit as $p\rightarrow \infty$.  Our motivation was the seminal work of P. Juutinen, P. Lindqvist, and J. Manfredi \cite{JLM}, where it was proven 
that $\lim_{p\rightarrow\infty}\lambda_p^{1/p}$ exists and equals 
$$
\lambda_\infty:=\inf\left\{\frac{|D\psi|_{L^\infty(\Omega)}}{|\psi|_{L^\infty(\Omega)}}: \psi\in W^{1,\infty}_0(\Omega),\; \psi\not\equiv 0\right\}=\left(\sup \{r: B_r(x)\subset \Omega \text{ for some $x\in \Omega$}\}\right)^{-1}.
$$
Here $W^{1,\infty}_0(\Omega)$ is the space of Lipschitz continuous functions $\psi:\overline{\Omega}\rightarrow\R$ that satisfy $\psi|_{\partial\Omega}=0$. 
Furthermore, these authors also showed that there is a sequence $(u_{p_j})_{j\in \N}$ of $p$-ground states that converge uniformly to a viscosity solution $w\in W^{1,\infty}_0(\Omega)$ of the PDE
\begin{equation}\label{InfinityGroundStates}
0=
\begin{cases}
\min\{-\Delta_\infty w, |Dw| - \lambda_\infty w\}, \quad &w>0,\\
-\Delta_\infty w, \quad &w=0,\\
\max\{-\Delta_\infty w, -|Dw| - \lambda_\infty w\}, \quad &w<0.\\
\end{cases}
\end{equation}
Here $\Delta_\infty \psi:=D^2\psi D\psi\cdot D\psi$ is the infinity Laplacian and 
nontrivial solutions of \eqref{InfinityGroundStates} having constant sign, are called {\it $\infty$-ground states}.

\par Passing to the limit as $p\rightarrow \infty$ in \eqref{IterationScheme}, we are able to conclude the subsequent result.  The novelty in the 
theorem below is that \eqref{InfinityIterationScheme} presents a new mechanism for generating $\infty$-ground states.

\begin{thm}\label{Thm2}
Assume $u_0\in C(\overline{\Omega})$ and denote $(u_{k,p})_{k\in \N}$ as the solution sequence of \eqref{IterationScheme}. \\
(i) There is a sequence $(p_j)_{j\in \N}$ increasing to $\infty$ and $(v_k)_{k\in \N}\subset W^{1,\infty}_0(\Omega)$ such that $u_{k,p_j}\rightarrow v_k$ 
uniformly on $\overline{\Omega}$ as $j\rightarrow\infty$ for each $k\in \N$. Moreover, $v_k$ is a viscosity solution of the PDE
\begin{equation}\label{InfinityIterationScheme}
0=
\begin{cases}
\min\{-\Delta_\infty v_k, |Dv_k| - v_{k-1}\}, \quad &v_{k-1}>0\\
-\Delta_\infty v_k, \quad &v_{k-1}=0\\
\max\{-\Delta_\infty v_k, -|Dv_k| - v_{k-1}\}, \quad &v_{k-1}<0\\
\end{cases} 
\end{equation}
for each $k\in \N$. (Here $v_0:=u_0$.)\\
(ii) The limit $L:=\lim_{k\rightarrow \infty}\lambda_\infty^k|Dv_k|_{L^\infty(\Omega)}$ exists. If $L>0$, 
$$
\lambda_\infty=\lim_{k\rightarrow\infty}\frac{|Dv_{k}|_{L^\infty(\Omega)}}{|v_{k}|_{L^\infty(\Omega)}}.
$$
and any uniformly convergent subsequence of $(\lambda_\infty^{k}v_{k})_{k\in \N}$ converges to a solution of \eqref{InfinityGroundStates}.
\end{thm}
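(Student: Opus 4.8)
The plan is to obtain (i) by a diagonal compactness argument built on uniform Lipschitz and sup-norm estimates for the $u_{k,p}$, combined with the stability of viscosity solutions under $p\to\infty$, and to obtain (ii) by transferring the monotonicity and limit structure of Theorem \ref{Thm1} through the limit $p\to\infty$.

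For part (i), I would argue inductively on $k$. Fix $k$ and suppose $u_{k-1,p}$ is known to converge uniformly (along a subsequence) to $v_{k-1}\in W^{1,\infty}_0(\Omega)$, with $\sup_p|u_{k-1,p}|_{L^\infty}<\infty$ as a byproduct. The key a priori bound is that the weak solution $u_{k,p}$ of \eqref{IterationScheme} satisfies $|u_{k,p}|_{L^\infty(\Omega)}\le C\,|u_{k-1,p}|_{L^\infty(\Omega)}^{1/(p-1)}$ and a gradient bound of the form $|Du_{k,p}|_{L^\infty(\Omega)}\le C\,|u_{k-1,p}|_{L^\infty(\Omega)}^{1/(p-1)}$ with $C$ depending only on $\Omega$ and $n$ (these come from comparison with the torsion-type solution on a ball containing $\Omega$, together with local $C^{1,\alpha}$ estimates for the $p$-Laplacian, uniform for large $p$; this is precisely the kind of estimate that underlies the convergence in \cite{JLM}). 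Since $|u_{k-1,p}|_{L^\infty}^{1/(p-1)}\to 1$ whenever the sup-norm stays bounded away from $0$ and $\infty$ (and stays bounded above in general), the family $(u_{k,p})_p$ is equi-Lipschitz and uniformly bounded for large $p$, so Arzel\`a--Ascoli gives a further subsequence along which $u_{k,p}\to v_k$ uniformly on $\overline\Omega$, with $v_k\in W^{1,\infty}_0(\Omega)$. That $v_k$ solves \eqref{InfinityIterationScheme} in the viscosity sense then follows from the standard recipe: $u_{k,p}$ is a viscosity solution (by the equivalence of weak and viscosity solutions for the $p$-Laplacian) of $-\Delta_p u_{k,p}=|u_{k-1,p}|^{p-2}u_{k-1,p}$, one rewrites this equation in the normalized form $-|Du_{k,p}|^{p-2}\big(\Delta u_{k,p}+(p-2)|Du_{k,p}|^{-2}\Delta_\infty u_{k,p}\big)=|u_{k-1,p}|^{p-2}u_{k-1,p}$, tests against a smooth $\varphi$ touching $v_k$ from above or below, and passes to the limit exploiting $u_{k,p}\to v_k$, $u_{k-1,p}\to v_{k-1}$ uniformly and the algebraic structure, exactly as in \cite{JLM} for the eigenvalue equation; the three-branch form of \eqref{InfinityIterationScheme} emerges according to the sign of $v_{k-1}(x_0)$ at the touching point. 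Applying this inductively (with $v_0=u_0\in C(\overline\Omega)$ as the base case) and diagonalizing over $k$ produces a single subsequence $(p_j)$ that works for all $k$ simultaneously.

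For part (ii), the strategy is to push the conclusions of Theorem \ref{Thm1} and Proposition \ref{PropProp} through $j\to\infty$. From Theorem \ref{Thm1}, $\mu_{p}^{k}u_{k,p}\to\psi_p$ in $W^{1,p}_0$ as $k\to\infty$; the point is instead to understand the behavior of $|Du_{k,p}|_{L^\infty}$ and the Rayleigh-type quotients uniformly in $k$. Here I would use the (to be proven) monotonicity of $k\mapsto \big(\int|u_{k-1,p}|^p/\int|u_{k,p}|^p\big)^{1-1/p}$ from Proposition \ref{PropProp}, together with $\lambda_p^{1/p}\to\lambda_\infty$ from \cite{JLM}, to show that the normalizing factors $\mu_p^{k}=\lambda_p^{k/(p-1)}$ rescale to $\lambda_\infty^{k}$ in the limit and that $\lambda_\infty^{k}|Dv_k|_{L^\infty}$ is monotone (nonincreasing) in $k$ — the monotone quantities survive the uniform limit because the relevant norms converge ($|Du_{k,p}|_{L^p}/|\Omega|^{1/p}\to|Dv_k|_{L^\infty}$ along the subsequence, using the lower semicontinuity of the $L^\infty$ norm under uniform convergence together with a matching upper bound from the equi-Lipschitz estimate). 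A bounded monotone sequence converges, giving the existence of $L$. If $L>0$, then $|v_k|_{L^\infty}$ is bounded away from $0$ for large $k$ (since $|Dv_k|_{L^\infty}\ge\lambda_\infty|v_k|_{L^\infty}$ always, and a reverse comparison from the equation keeps it comparable to $|Dv_k|_{L^\infty}$), and the ratio $|Dv_k|_{L^\infty}/|v_k|_{L^\infty}$ can be shown to converge to $\lambda_\infty$ by the same telescoping argument that proves \eqref{LamPeeLimit} in Theorem \ref{Thm1}, now carried out with $L^\infty$ norms: one sandwiches the ratio between consecutive normalized gradient norms whose ratio tends to $1$. Finally, for a uniformly convergent subsequence $\lambda_\infty^{k_i}v_{k_i}\to w$, one passes to the limit in \eqref{InfinityIterationScheme}: after rescaling, the equation for $\lambda_\infty^{k}v_k$ reads (in each sign branch) $0=\min\{-\Delta_\infty(\lambda_\infty^{k}v_k),\,|D(\lambda_\infty^{k}v_k)|-\lambda_\infty\cdot\lambda_\infty^{k-1}v_{k-1}\}$ using the $1$-homogeneity of $-\Delta_\infty$ in its scaling and $2$-homogeneity absorbed by the factor; since $\lambda_\infty^{k-1}v_{k-1}$ and $\lambda_\infty^{k}v_k$ have the same uniform limit $w$ along the subsequence (their ratio $\lambda_\infty|Dv_{k-1}|_{L^\infty}/\ldots\to 1$-type control, or more directly $\lambda_\infty^{k}v_k-\lambda_\infty^{k-1}v_{k-1}\to 0$ uniformly), viscosity stability yields that $w$ solves \eqref{InfinityGroundStates}; the constant-sign requirement for an $\infty$-ground state holds because the $v_k$ inherit a sign from $u_0$ when $u_0$ is signed, or is part of the hypothesis package ensuring $L>0$.

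The main obstacle is the first step of part (i): establishing the \emph{uniform-in-}$p$ Lipschitz bound $|Du_{k,p}|_{L^\infty(\Omega)}\le C$ for large $p$, with $C$ independent of $p$. Interior $C^{1,\alpha}$ estimates for the $p$-Laplacian are classical but their constants a priori degenerate as $p\to\infty$; the resolution is to use the specific right-hand side $|u_{k-1,p}|^{p-2}u_{k-1,p}$, whose $L^\infty$ norm is $|u_{k-1,p}|_{L^\infty}^{p-1}$, and to compare $u_{k,p}$ from above and below with explicit $p$-torsion functions on balls, exploiting that $\big(|u_{k-1,p}|_{L^\infty}^{p-1}\big)^{1/(p-1)}=|u_{k-1,p}|_{L^\infty}$ is $p$-independent; this is the same phenomenon that makes the limit in \cite{JLM} work, and the estimate near $\partial\Omega$ requires either a Hopf-type barrier or the mild regularity of $\Omega$ already invoked implicitly via $W^{1,\infty}_0(\Omega)$. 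I would also need to be careful that the three branches in \eqref{InfinityIterationScheme} are handled consistently at points where $v_{k-1}$ changes sign, but this is routine once the touching-point analysis is set up as in \cite{JLM}.
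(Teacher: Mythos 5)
Your plan for part (i) stands or falls on the estimate you yourself flag as ``the main obstacle'': a Lipschitz bound $|Du_{k,p}|_{L^\infty(\Omega)}\le C$ uniform in $p$. This is a genuine gap, not a routine detail. The barrier comparison with $p$-torsion functions only yields uniform $L^\infty$ bounds on $u_{k,p}$ (this is exactly the paper's Lemma 3.1, used there merely to get continuity so the viscosity framework applies); it does not give gradient bounds, and global gradient bounds uniform in $p$ would require boundary regularity of $\Omega$ that the theorem does not assume ($\Omega$ is just a bounded domain, $u_0\in C(\overline\Omega)$). Interior $C^{1,\alpha}$ constants indeed degenerate as $p\to\infty$, and in \cite{JLM} the corresponding compactness is not obtained this way either. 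The paper avoids the issue entirely: Lemma \ref{DecreaseW1p} together with \eqref{ThreeHalfsIneq} gives $|Du_{k,p}|_{L^p(\Omega)}\le |\Omega|^{1/p}\mu_p^{-(k-1+1/p)}|u_0|_{L^\infty(\Omega)}$, hence uniform bounds in $W^{1,p_0}_0(\Omega)$ for one fixed $p_0>n$; Morrey then gives equi-H\"older (not Lipschitz) bounds in $C^{1-n/p_0}$, which is all Arzel\`a--Ascoli needs, and the Lipschitz bound $|Dv_k|_{L^\infty}\le|v_{k-1}|_{L^\infty}$ (\eqref{MuYImporTante}) is derived only for the limit functions, via the $L^r$-with-$r\to\infty$ trick and weak convergence. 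Your touching-point analysis for the three-branch limit equation is essentially the paper's argument and is fine; also note the slip $|u_{k,p}|_{L^\infty}\le C|u_{k-1,p}|_{L^\infty}^{1/(p-1)}$ — homogeneity of $\Delta_p$ against the right-hand side $|u_{k-1}|^{p-2}u_{k-1}$ gives $C|u_{k-1,p}|_{L^\infty}$, so the subsequent claim that this exponent forces the bound toward $1$ is off.

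Part (ii) as you propose it inherits the same gap and is more roundabout than necessary. Your route transfers the $p$-level monotonicity of Proposition \ref{PropProp} through $j\to\infty$ and needs the norm convergence $|Du_{k,p}|_{L^p}/|\Omega|^{1/p}\to|Dv_k|_{L^\infty}$, whose upper half again rests on the unproven equi-Lipschitz estimate (lower semicontinuity only gives one inequality, and \eqref{ThreeHalfsIneq} bounds the $\limsup$ by $|v_{k-1}|_{L^\infty}$, not by $|Dv_k|_{L^\infty}$). None of this is needed: once \eqref{MuYImporTante} is in hand, the $L^\infty$ Poincar\'e inequality $\lambda_\infty|v|_{L^\infty}\le|Dv|_{L^\infty}$ for $v\in W^{1,\infty}_0(\Omega)$ immediately makes both $\lambda_\infty^k|Dv_k|_{L^\infty}$ and $\lambda_\infty^k|v_k|_{L^\infty}$ nonincreasing, their limits $L$ and $M$ satisfy $L=\lambda_\infty M$, and $L>0$ forces the Rayleigh-type ratio to converge to $\lambda_\infty$; the last claim follows because $w_k:=\lambda_\infty^k v_k$ solves the rescaled scheme and viscosity stability applies — no comparison of $\lambda_\infty^k v_k$ with $\lambda_\infty^{k-1}v_{k-1}$ beyond this is invoked in the paper. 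So the overall architecture of your proposal is in the right spirit, but the compactness input it relies on is not established and is avoidable.
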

\begin{rem}
Obviously, if $u_0\geq 0$ and $L>0$, then any uniformly convergent subsequence of $(\lambda_\infty^{k}v_{k})_{k\in \N}$ converges to an $\infty$-ground state.
\end{rem}

\par We would especially like to thank Richard Tapia. After learning about our previous work \cite{HyndLindgren} which employed a doubly nonlinear flow to approximate $\lambda_p$ and 
$p$-ground states, Professor Tapia suggested that it may be possible to use inverse iteration to obtain similar results.  As noted above, the authors R. Biezuner, G. Ercole, and E. Martins were
the first to make this observation in \cite{biezuner}.  Nevertheless, we believe this paper adds significantly to \cite{biezuner} and makes clear the connection between inverse iteration and $p$-ground states.

\section{Convergence of the scheme}
Before proving Theorem \ref{Thm1}, we will first make an observation which illuminates how $\mu_p$ enters the statement of 
the theorem.  In particular, we will argue that $(\mu^k_pu_k)_{k\in \N}$ is bounded in $W^{1,p}_0(\Omega)$ and
$\left(\mu_p^k |Du_k|_{L^p(\Omega)}\right)_{k\in \N}$ is a nonincreasing sequence of real numbers. 
\begin{lem}\label{DecreaseW1p}
For each $k\in \N$, 
$$
\mu_p^p\int_\Omega|Du_{k+1}|^pdx\le \int_\Omega|Du_{k}|^pdx.
$$
\end{lem}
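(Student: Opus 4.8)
The plan is to test the weak formulation of the $(k+1)$-th equation against the natural competitor $u_{k+1}$ itself, and then to relate the resulting right-hand side back to $\|Du_k\|_{L^p}$ via the variational characterization of $\lambda_p$. First I would take $\phi = u_{k+1}$ in the weak form of \eqref{IterationScheme}, which gives
$$
\int_\Omega |Du_{k+1}|^p\,dx = \int_\Omega |u_k|^{p-2}u_k\,u_{k+1}\,dx.
$$
To the right-hand side I would apply Hölder's inequality with exponents $p/(p-1)$ and $p$, obtaining
$$
\int_\Omega |u_k|^{p-2}u_k\,u_{k+1}\,dx \le \left(\int_\Omega |u_k|^p\,dx\right)^{\frac{p-1}{p}}\left(\int_\Omega |u_{k+1}|^p\,dx\right)^{\frac1p}.
$$
Next I would invoke the definition \eqref{LamPee} of $\lambda_p$ twice: once on $u_k$ to bound $\int_\Omega|u_k|^p\,dx \le \lambda_p^{-1}\int_\Omega|Du_k|^p\,dx$, and once on $u_{k+1}$ to bound $\int_\Omega|u_{k+1}|^p\,dx \le \lambda_p^{-1}\int_\Omega|Du_{k+1}|^p\,dx$. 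Substituting both estimates into the Hölder bound yields
$$
\int_\Omega |Du_{k+1}|^p\,dx \le \lambda_p^{-\frac{p-1}{p}}\left(\int_\Omega |Du_k|^p\,dx\right)^{\frac{p-1}{p}}\cdot \lambda_p^{-\frac1p}\left(\int_\Omega |Du_{k+1}|^p\,dx\right)^{\frac1p},
$$
so that
$$
\int_\Omega |Du_{k+1}|^p\,dx \le \lambda_p^{-1}\left(\int_\Omega |Du_k|^p\,dx\right)^{\frac{p-1}{p}}\left(\int_\Omega |Du_{k+1}|^p\,dx\right)^{\frac1p}.
$$

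Finally I would divide both sides by $\left(\int_\Omega |Du_{k+1}|^p\,dx\right)^{1/p}$ — handling the trivial case $Du_{k+1}\equiv 0$ separately, where the inequality holds vacuously — to isolate
$$
\left(\int_\Omega |Du_{k+1}|^p\,dx\right)^{\frac{p-1}{p}} \le \lambda_p^{-1}\left(\int_\Omega |Du_k|^p\,dx\right)^{\frac{p-1}{p}}.
$$
Raising both sides to the power $p/(p-1)$ gives $\int_\Omega |Du_{k+1}|^p\,dx \le \lambda_p^{-\frac{p}{p-1}}\int_\Omega |Du_k|^p\,dx$, and since $\mu_p^p = \lambda_p^{\frac{p}{p-1}}$ by definition of $\mu_p$, this is exactly the claimed estimate. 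The only subtlety worth flagging is the degenerate case where some iterate has vanishing gradient; but if $u_k$ is not identically zero then, by uniqueness and the strong maximum principle considerations underlying \eqref{IterationScheme}, $u_{k+1}$ will be a genuine nonzero solution, and if $u_k \equiv 0$ then $u_{k+1}\equiv 0$ and both sides vanish. I do not anticipate a serious obstacle here: the argument is a clean two-fold application of the Poincaré/eigenvalue inequality sandwiched around one Hölder step, and the bookkeeping of exponents is the only thing requiring care.
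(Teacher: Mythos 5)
Your argument is correct and coincides with the paper's own proof: testing the weak form of \eqref{IterationScheme} with $u_{k+1}$, applying H\"older's inequality, and then invoking the Poincar\'e inequality (i.e.\ the definition of $\lambda_p$) on both $u_k$ and $u_{k+1}$ before cancelling the $\left(\int_\Omega|Du_{k+1}|^p dx\right)^{1/p}$ factor is exactly the computation in \eqref{FirstIneq}. Your handling of the degenerate case $\int_\Omega|Du_{k+1}|^p dx=0$ matches the paper's remark that one may assume this quantity is nonzero.
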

\begin{proof} Assume $\int_\Omega|Du_{k+1}|^pdx\neq 0$. We employ H\"{o}lder's inequality and the Poincar\'{e} inequality to find  
\begin{align}\label{FirstIneq}
\int_{\Omega}|Du_{k+1}|^pdx&=\int_{\Omega}|Du_{k+1}|^{p-2}Du_{k+1}Du_{k+1}dx\nonumber \\
&=\int_{\Omega}|u_{k}|^{p-2}u_{k} u_{k+1}dx\nonumber \\
&\le \left(\int_\Omega|u_{k}|^pdx\right)^{1-1/p}\left(\int_\Omega|u_{k+1}|^pdx\right)^{1/p}\\
&\le \left(\frac{1}{\lambda_p}\int_\Omega|Du_{k}|^pdx\right)^{1-1/p}\left(\frac{1}{\lambda_p}\int_\Omega|Du_{k+1}|^pdx\right)^{1/p}\nonumber \\
&= \frac{1}{\lambda_p}\left(\int_\Omega|Du_{k}|^pdx\right)^{1-1/p}\left(\int_\Omega|Du_{k+1}|^pdx\right)^{1/p}.\nonumber
\end{align}
Consequently, 
$$
\int_{\Omega}|Du_{k+1}|^pdx\le  \frac{1}{\lambda_p^{p/(p-1)}}\int_{\Omega}|Du_{k}|^pdx
$$
which proves the claim. 
\end{proof}
\begin{rem}
A minor variation in the proof of Lemma \ref{DecreaseW1p} gives the estimate 
\begin{equation}\label{ThreeHalfsIneq}
\int_\Omega|Du_{k}|^pdx\le\frac{1}{\mu_p}\int_\Omega|u_{k-1}|^pdx
\end{equation}
for each $k\in \N$. This estimate will be employed in the proof of Theorem \ref{Thm2}.
\end{rem}
\begin{proof}[~Proof of Theorem \ref{Thm1}]
Set $\psi_k:=\mu_p^{k}u_k$ $(k\in \N)$ and 
$$
S:=\lim_{k\rightarrow \infty}\int_{\Omega}|D\psi_k|^pdx.
$$
Observe that the limit defining $S$ exists by Lemma \ref{DecreaseW1p}.  If $S=0$, the assertion follows. Let us now assume otherwise.  

\par Notice that $(\psi_k)_{k\in \N}$ satisfies the sequence of PDE
$$
\begin{cases}
-\Delta_p\psi_k=\lambda_p |\psi_{k-1}|^{p-2}\psi_{k-1}, \quad & x\in \Omega,\\
\hspace{.34in}\psi_k=0, \quad &x\in \partial \Omega.
\end{cases}
$$
By Lemma \ref{DecreaseW1p} and Rellich-Kondrachov compactness, there is $\psi\in W^{1,p}_0(\Omega)$ and a subsequence $(\psi_{k_j})_{j\in \N}$ so that 
$\psi_{k_j}\rightarrow \psi$ in $L^p(\Omega)$ and $D\psi_{k_j}\rightharpoonup D\psi$ in $L^p(\Omega;\R^n)$, as $j\rightarrow\infty$. Also note 
\begin{align*}
\int_\Omega |D\psi_{k_j}|^pdx& =\int_\Omega |D\psi_{k_j}|^{p-2}D\psi_{k_j}\cdot D\psi_{k_j}dx=\lambda_p\int_\Omega |\psi_{k_j-1}|^{p-2}\psi_{k_j-1}\psi_{k_j}dx.
\end{align*} 
\par Since $\psi_{k_j}\rightarrow \psi$ in $L^p(\Omega)$, 
$$
\limsup_{j\rightarrow \infty}\int_\Omega |D\psi_{k_j}|^pdx= \lambda_p \int_\Omega|\psi|^pdx\le \int_\Omega|D\psi|^pdx.
$$
And the weak convergence $D\psi_{k_j}\rightharpoonup D\psi$ in $L^p(\Omega;\R^n)$ gives 
$$
\liminf_{j\rightarrow \infty}\int_\Omega |D\psi_{k_j}|^pdx\ge \int_\Omega|D\psi|^pdx.
$$
Thus, $\psi_{k_j}\rightarrow \psi$ in $W^{1,p}_0(\Omega)$, $S=\int_\Omega |D\psi|^pdx$ and
$$
 \int_\Omega|D\psi|^pdx=\lambda_p\int_\Omega|\psi|^pdx.
$$
\par As $S>0$, $\psi\not\equiv 0$ and thus $\psi$ is a $p$-ground state. Since $S$ is the same for all any subsequential limit, the simplicity of $\lambda_p$ implies that $\psi_k\rightarrow \psi$ in $W^{1,p}_0(\Omega)$ as claimed. Moreover, 
$$
\lim_{k\rightarrow\infty}\frac{\int_\Omega|Du_k|^pdx}{\int_\Omega|u_k|^pdx}=\lim_{k\rightarrow\infty}\frac{\int_\Omega|D\psi_k|^pdx}{\int_\Omega|\psi_k|^pdx}=
\frac{\int_\Omega|D\psi|^pdx}{\int_\Omega|\psi|^pdx}=\lambda_p.
$$
\end{proof}
Observe that if $u_0$ is a $p$-ground state, then $(\mu_p^{-k}u_0)_{k\in \N}$ is a ``separation of variables"  solution of \eqref{IterationScheme}. This is a 
trivial case of Theorem \ref{Thm1}. Also note that $\lim_{k\rightarrow\infty}\mu_p^{k}u_k$ could vanish identically. For instance, this occurs when $p=2$ and $u_0$ is an eigenfunction of the Dirichlet Laplacian corresponding to an eigenvalue different that $\lambda_2$.   Let us now see how the limit \eqref{BEMconj} follows from
Theorem \ref{Thm1}.
 
\begin{cor}\label{BiezCor}
Assume $\lim_{k\rightarrow\infty}\mu_p^{k}|Du_k|_{L^p(\Omega)}\not\equiv 0$, then the limit 
\eqref{BEMconj} holds.
\end{cor}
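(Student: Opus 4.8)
The plan is to deduce the corollary directly from Theorem \ref{Thm1} by a short computation, exploiting the normalized sequence $\psi_k:=\mu_p^k u_k$ that already appears in its proof. First I would observe that the hypothesis $\lim_{k\to\infty}\mu_p^k|Du_k|_{L^p(\Omega)}\neq 0$ is precisely the condition that makes the limit function nonzero: since
$$
S=\lim_{k\to\infty}\int_\Omega|D\psi_k|^p\,dx=\lim_{k\to\infty}\left(\mu_p^k|Du_k|_{L^p(\Omega)}\right)^p
$$
(the limit existing by Lemma \ref{DecreaseW1p}), the hypothesis says $S>0$, and the proof of Theorem \ref{Thm1} then shows $\psi:=\lim_{k\to\infty}\psi_k$ is a $p$-ground state. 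In particular $\psi\not\equiv 0$, and from $\psi_k\to\psi$ in $W^{1,p}_0(\Omega)$ one gets $\int_\Omega|\psi_k|^p\,dx\to\int_\Omega|\psi|^p\,dx=:m>0$ (indeed $m=S/\lambda_p$).

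Next I would rewrite the target ratio in terms of $\psi_k$. Because $\psi_k=\mu_p^k u_k$,
$$
\frac{\int_\Omega|u_{k-1}|^p\,dx}{\int_\Omega|u_k|^p\,dx}
=\frac{\mu_p^{-(k-1)p}\int_\Omega|\psi_{k-1}|^p\,dx}{\mu_p^{-kp}\int_\Omega|\psi_k|^p\,dx}
=\mu_p^{p}\cdot\frac{\int_\Omega|\psi_{k-1}|^p\,dx}{\int_\Omega|\psi_k|^p\,dx}.
$$
Since both $\int_\Omega|\psi_{k-1}|^p\,dx$ and $\int_\Omega|\psi_k|^p\,dx$ tend to the same positive number $m$, the quotient on the right converges to $1$, hence $\lim_{k\to\infty}\int_\Omega|u_{k-1}|^p\,dx\big/\int_\Omega|u_k|^p\,dx=\mu_p^{p}$. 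Raising to the power $1-1/p=(p-1)/p$ and recalling $\mu_p=\lambda_p^{1/(p-1)}$ yields
$$
\lim_{k\to\infty}\left(\frac{\int_\Omega|u_{k-1}|^p\,dx}{\int_\Omega|u_k|^p\,dx}\right)^{1-1/p}=\mu_p^{p-1}=\lambda_p,
$$
which is exactly \eqref{BEMconj}.

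I do not expect a genuine obstacle here once Theorem \ref{Thm1} is available; the only delicate point is that the denominators $\int_\Omega|u_k|^p\,dx$ must not degenerate after the $\mu_p^k$-rescaling, and this is guaranteed by the standing hypothesis through the nonvanishing of $\psi$. (One can avoid even mentioning the identity $m=S/\lambda_p$ and simply use that $\int_\Omega|\psi_k|^p\,dx\to\int_\Omega|\psi|^p\,dx\neq 0$, which follows from $\psi\not\equiv 0$ and $L^p$-convergence.)
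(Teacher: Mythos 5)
Your argument is correct and is essentially the paper's own proof: both rewrite the ratio $\int_\Omega|u_{k-1}|^p dx/\int_\Omega|u_k|^p dx$ as $\mu_p^p$ times the corresponding ratio for $\psi_k=\mu_p^k u_k$, invoke Theorem \ref{Thm1} (with the hypothesis guaranteeing $\psi\not\equiv 0$, so the $L^p$ norms of $\psi_k$ converge to a positive limit), and conclude the ratio tends to $\mu_p^p=\lambda_p^{p/(p-1)}$, which gives \eqref{BEMconj} after raising to the power $1-1/p$. Your write-up merely makes explicit the nondegeneracy of the denominators, which the paper leaves implicit.
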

\begin{proof}
Set $\psi_k:=\mu_p^{k}u_k$. By the previous assertion, $(\psi_k)_{k\in \N}$ converges to a $p$-ground state in $W^{1,p}_0(\Omega)$.    As a result, 
$$
\lim_{k\rightarrow \infty}\frac{\int_\Omega|u_{k-1}|^pdx}{\int_\Omega|u_{k}|^pdx}=
\mu_p^p\lim_{k\rightarrow \infty}\frac{\int_\Omega|\psi_{k-1}|^pdx}{\int_\Omega|\psi_{k}|^pdx}=\lambda^{p/(p-1)}_p.
$$
\end{proof}
We conclude this section by establishing some fundamental properties of the iteration scheme \eqref{IterationScheme}. The monotonicity \eqref{PropUno}
suggests the iteration scheme improves the Rayleigh quotient $\int_\Omega|D\psi|^pdx/\int_\Omega|\psi|^pdx$ at each step, and the monotonicity 
\eqref{PropDos} gives more insight on the limit \eqref{BEMconj}. 
\begin{prop}\label{PropProp}
Assume $u_0\in W^{1,p}_0(\Omega)$ and $u_0\not\equiv 0$. Then $u_k\not\equiv 0$ for each $k\in \N$, 
\begin{equation}\label{PropUno}
\frac{\int_\Omega|Du_{k+1}|^pdx}{\int_\Omega|u_{k+1}|^pdx}\le \frac{\int_\Omega|Du_{k}|^pdx}{\int_\Omega|u_{k}|^pdx},
\end{equation}
and
\begin{equation}\label{PropDos}
\frac{\int_\Omega|u_{k}|^pdx}{\int_\Omega|u_{k+1}|^pdx}\le \frac{\int_\Omega|u_{k-1}|^pdx}{\int_\Omega|u_{k}|^pdx}
\end{equation}
for each $k\in \N$.  
\end{prop}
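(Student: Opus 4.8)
The plan is to reduce the whole proposition to two elementary ``master inequalities'' for the quantities
\[
a_k:=\int_\Omega|u_k|^p\,dx\qquad\text{and}\qquad b_k:=\int_\Omega|Du_k|^p\,dx,
\]
and then to read off \eqref{PropUno} and \eqref{PropDos} by a short algebraic manipulation.

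First I would dispose of the nonvanishing claim, which is also what makes all the ratios in the statement meaningful. Since $u_0\in W^{1,p}_0(\Omega)$ and $u_0\not\equiv 0$ we have $a_0>0$, and testing the weak form of \eqref{IterationScheme} for $u_1$ against $\phi=u_0$ gives $\int_\Omega|Du_1|^{p-2}Du_1\cdot Du_0\,dx=\int_\Omega|u_0|^p\,dx>0$, so $u_1\not\equiv 0$. Repeating this argument --- testing the equation for $u_{k+1}$ against $u_k$ --- shows by induction that $a_k>0$, and therefore $b_k>0$, for every $k$.

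Next come the two master inequalities. Testing the equation for $u_{k+1}$ against $u_{k+1}$ itself and applying H\"older's inequality --- precisely the computation in the opening lines of the proof of Lemma~\ref{DecreaseW1p}; see \eqref{FirstIneq} --- gives
\[
b_{k+1}\le a_k^{(p-1)/p}\,a_{k+1}^{1/p}\qquad(\mathrm A).
\]
Testing the equation for $u_{k+1}$ instead against $u_k$, bounding the integrand pointwise by $|Du_{k+1}|^{p-2}Du_{k+1}\cdot Du_k\le|Du_{k+1}|^{p-1}|Du_k|$ and then applying H\"older's inequality, gives
\[
a_k\le b_{k+1}^{(p-1)/p}\,b_k^{1/p}\qquad(\mathrm B).
\]
(Equivalently, $(\mathrm B)$ comes from comparing the minimum of the functional that defines $u_{k+1}$ with its value at $tu_k$ for the optimal $t>0$.)

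Finally I would combine $(\mathrm A)$ and $(\mathrm B)$; this is cleanest after taking logarithms, so write $\alpha_k:=\log a_k$ and $\beta_k:=\log b_k$, so that $(\mathrm A)$ reads $\beta_{k+1}\le\frac{p-1}{p}\alpha_k+\frac1p\alpha_{k+1}$ and $(\mathrm B)$ reads $\alpha_k\le\frac{p-1}{p}\beta_{k+1}+\frac1p\beta_k$. Substituting $(\mathrm A)$ at levels $k$ and $k+1$ into $(\mathrm B)$ at level $k$ and collecting coefficients gives $2\alpha_k\le\alpha_{k-1}+\alpha_{k+1}$, that is $a_k^2\le a_{k-1}\,a_{k+1}$, which is exactly \eqref{PropDos}. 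Substituting only $(\mathrm A)$ at level $k+1$ into $(\mathrm B)$ at level $k$ gives $(2p-1)\alpha_k\le(p-1)\alpha_{k+1}+p\beta_k$, and combining this once more with $(\mathrm A)$ at level $k+1$ yields $\beta_{k+1}-\alpha_{k+1}\le\beta_k-\alpha_k$, which is exactly \eqref{PropUno}. I expect the only genuine idea here to be the choice of test function $u_k$ in the derivation of $(\mathrm B)$; everything after that is bookkeeping with the two weights $\frac{p-1}{p}$ and $\frac1p$, and the main thing to get right is keeping that bookkeeping straight rather than any analytic subtlety.
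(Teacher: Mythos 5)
Your proof is correct and follows essentially the same route as the paper: your master inequalities (A) and (B) are precisely the paper's \eqref{FirstIneq} and \eqref{SecIneq} (testing the equation for $u_{k+1}$ against $u_{k+1}$ and against $u_k$, respectively), and the nonvanishing argument is the paper's induction. The only difference is cosmetic bookkeeping --- you combine the two inequalities in logarithmic form and get \eqref{PropDos} directly, whereas the paper manipulates the ratios and routes \eqref{PropDos} through \eqref{PropUno}.
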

\begin{proof}
If $u_0\not\equiv 0$, then $u_1\not\equiv 0$ or \eqref{IterationScheme} could not hold when $k=1$. By induction, we may conclude $u_k\not\equiv 0$ for each $k\in \N$.  \par Now fix $k\in \N$ and observe
\begin{align}\label{SecIneq}
\int_\Omega|u_{k}|^pdx&=\int_\Omega(|u_{k}|^{p-2}u_{k})u_{k}dx\nonumber \\
&=\int_\Omega|Du_{k+1}|^{p-2}Du_{k+1}\cdot Du_{k}dx\nonumber \\
&\le \left(\int_\Omega|Du_{k+1}|^pdx\right)^{1-1/p}\left(\int_\Omega|Du_{k}|^pdx\right)^{1/p}.
\end{align}
Combining the bound \eqref{FirstIneq} with \eqref{SecIneq} gives
\begin{align*}
\frac{\int_\Omega|Du_{k+1}|^pdx}{\int_\Omega|u_{k+1}|^pdx}&\le \frac{ \left(\int_\Omega|u_{k}|^pdx\right)^{1-1/p}\left(\int_\Omega|u_{k+1}|^pdx\right)^{1/p}}{\int_\Omega|u_{k+1}|^pdx}\\
&=\frac{ \int_\Omega|u_{k}|^pdx}{\left(\int_\Omega|u_{k+1}|^pdx\right)^{1-1/p} \left(\int_\Omega|u_{k}|^pdx\right)^{1/p}}\\
&\le \frac{\left(\int_\Omega|Du_{k+1}|^pdx\right)^{1-1/p}\left(\int_\Omega|Du_{k}|^pdx\right)^{1/p}}{\left(\int_\Omega|u_{k+1}|^pdx\right)^{1-1/p} \left(\int_\Omega|u_{k}|^pdx\right)^{1/p}}\\
&=\left(\frac{\int_\Omega|Du_{k+1}|^pdx}{\int_\Omega|u_{k+1}|^pdx}\right)^{1-1/p}\left(\frac{\int_\Omega|Du_{k}|^pdx}{\int_\Omega|u_{k}|^pdx}\right)^{1/p},
\end{align*}
which verifies \eqref{PropUno}.

\par As for \eqref{PropDos}, we employ \eqref{SecIneq}, \eqref{PropUno} and \eqref{FirstIneq} to find
\begin{align*}
\frac{\int_\Omega|u_{k}|^pdx}{\int_\Omega|u_{k+1}|^pdx}&\le  \frac{\left(\int_\Omega|Du_{k+1}|^pdx\right)^{1-1/p}\left(\int_\Omega|Du_{k}|^pdx\right)^{1/p}}{\int_\Omega|u_{k+1}|^pdx}\\
&\le\left[\int_\Omega|u_{k+1}|^pdx\;\frac{\int_\Omega|Du_{k}|^pdx}{\int_\Omega|u_{k}|^pdx}\right]^{1-1/p}\frac{\left(\int_\Omega|Du_{k}|^pdx\right)^{1/p}}{\int_\Omega|u_{k+1}|^pdx}\\
&=\frac{\int_\Omega|Du_{k}|^pdx}{\left(\int_\Omega|u_{k+1}|^pdx\right)^{1/p}\left(\int_\Omega|u_{k}|^pdx\right)^{1-1/p}}\\
&\le \frac{\left(\int_\Omega|u_{k}|^pdx\right)^{1/p}\left(\int_\Omega|u_{k-1}|^pdx\right)^{1-1/p}}{\left(\int_\Omega|u_{k+1}|^pdx\right)^{1/p}\left(\int_\Omega|u_{k}|^pdx\right)^{1-1/p}}\\
&=\left(\frac{\int_\Omega|u_{k}|^pdx}{\int_\Omega|u_{k+1}|^pdx}\right)^{1/p}\left(\frac{\int_\Omega|u_{k-1}|^pdx}{\int_\Omega|u_{k}|^pdx}\right)^{1-1/p}.
\end{align*}
\end{proof}
\begin{rem}
If $u_0\not\equiv 0$, the sequences
$$
\left(\frac{\int_{\Omega}|Du_k|^pdx }{\int_{\Omega}|u_k|^pdx }\right)_{k\in \N}\quad \text{and}\quad 
\left(\frac{\int_\Omega|u_{k-1}|^pdx}{\int_\Omega|u_{k}|^pdx}\right)_{k\in \N}
$$
are bounded below by $\lambda_p$ and $\lambda_p^{p/(p-1)}$, respectively; see Proposition 2.8 of \cite{biezuner}. In view of the monotonicity
\eqref{PropUno} and \eqref{PropDos}, both of these sequences are convergent.  However, the limits \eqref{LamPeeLimit} and \eqref{BEMconj} may not 
hold if $\lim_{k\rightarrow\infty}\mu_p^{k}u_k\equiv 0$. For example, these limits fail if $p=2$ and $u_0$ is an eigenfunction of the Dirichlet Laplacian that corresponds to an eigenvalue not equal to $\lambda_2$. 
\end{rem}

\section{The large $p$ limit}
This section is dedicated to a proof of Theorem \ref{Thm2}, which characterizes the large $p$ limit of the solutions of the iteration scheme \eqref{IterationScheme}.  We
begin with an important observation regarding weak solution sequences $(u_k)_{k\in \N}\subset W^{1,p}_0(\Omega)$ of \eqref{IterationScheme}
when $u_0\in C(\overline{\Omega})$
\begin{lem}
Suppose $u_0\in C(\overline{\Omega})$, and let $(u_k)_{k\in \N}\subset W^{1,p}_0(\Omega)$ denote the associated solution sequence of \eqref{IterationScheme}. 
Then for each $k\in \N$, there is $\alpha_k\in (0,1)$ such that 
$$
u_k\in C^{1,\alpha_k}_{\text{loc}}(\Omega)\cap L^\infty(\Omega).
$$
\end{lem}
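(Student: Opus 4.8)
The plan is to prove both assertions at once by induction on $k$, the mechanism being a familiar bootstrap: at each step $u_k$ solves a $p$-Laplace equation whose datum is governed by $u_{k-1}$, so as soon as $u_{k-1}$ is known to be bounded we first produce a \emph{global} $L^\infty$ bound for $u_k$ and then feed the bounded right-hand side into the interior regularity theory for the $p$-Laplacian. Concretely, for $k\geq 1$ write $f_{k-1}:=|u_{k-1}|^{p-2}u_{k-1}$, so $u_k\in W^{1,p}_0(\Omega)$ is the weak solution of $-\Delta_p u_k=f_{k-1}$ in $\Omega$ with zero boundary values, and note $\|f_{k-1}\|_{L^\infty(\Omega)}=\|u_{k-1}\|_{L^\infty(\Omega)}^{p-1}$ once the latter is finite. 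The base of the induction is the hypothesis $u_0\in C(\overline\Omega)\subset L^\infty(\Omega)$.

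\emph{Step 1 (global $L^\infty$ bound).} Assume $u_{k-1}\in L^\infty(\Omega)$, so $f_{k-1}\in L^\infty(\Omega)$. Since $\Omega$ is bounded we may assume, after a translation, that $\Omega\subset\{0<x_1<R\}$; then $\Phi(x):=\tfrac{p-1}{p}\,\|f_{k-1}\|_{L^\infty(\Omega)}^{1/(p-1)}\big(R^{p/(p-1)}-x_1^{p/(p-1)}\big)$ is a nonnegative weak supersolution of $-\Delta_p\Phi=\|f_{k-1}\|_{L^\infty(\Omega)}\geq f_{k-1}$ in $\Omega$, with $\Phi\geq 0=u_k$ on $\partial\Omega$. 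The comparison principle for $-\Delta_p$ (valid because the operator is strictly monotone) yields $u_k\leq\Phi$, and applying it to $-u_k$ gives $|u_k|\leq\Phi$, so $u_k\in L^\infty(\Omega)$. This closes the induction and shows $u_k\in L^\infty(\Omega)$ for every $k\in\N$. Alternatively one may simply cite the De Giorgi/Moser estimate $\|u_k\|_{L^\infty(\Omega)}\le C(n,p,|\Omega|,q)\,\|f_{k-1}\|_{L^q(\Omega)}^{1/(p-1)}$ for any $q>n/p$.

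\emph{Step 2 (interior $C^{1,\alpha}$ regularity).} With $u_{k-1}\in L^\infty(\Omega)$ in hand, the datum $f_{k-1}$ belongs to $L^\infty(\Omega)\subset L^\infty_{\mathrm{loc}}(\Omega)$, and the classical interior $C^{1,\alpha}$ theory for quasilinear degenerate elliptic equations (DiBenedetto, Tolksdorf, Lewis), applied to $-\Delta_p u_k=f_{k-1}$, furnishes an exponent $\alpha_k\in(0,1)$ depending only on $n$ and $p$ with $u_k\in C^{1,\alpha_k}_{\mathrm{loc}}(\Omega)$; on each compact $K\subset\Omega$ the $C^{1,\alpha_k}$ norm is controlled by $n$, $p$, $\mathrm{dist}(K,\partial\Omega)$, $\|u_k\|_{L^\infty(\Omega)}$ and $\|f_{k-1}\|_{L^\infty(\Omega)}$. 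Combining Steps 1 and 2 gives the lemma.

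\emph{Main obstacle.} There is no serious difficulty: the argument is an induction wrapped around standard results. The one point requiring a little attention is that the quoted $C^{1,\alpha}$ estimates are purely interior, so global boundedness cannot be read off from them; this is why the $L^\infty$ bound has to be obtained separately, either via the explicit barrier above (using that $\Omega$ is bounded) or via a De Giorgi truncation argument exploiting $u_k\in W^{1,p}_0(\Omega)$.
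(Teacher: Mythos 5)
Your proposal is correct and follows essentially the same strategy as the paper: an explicit barrier plus the weak comparison principle for $-\Delta_p$ to get the global $L^\infty$ bound (the paper uses the radial barrier $c\,|x-y|^{p/(p-1)}$ with $\Delta_p w=1$, you use an equivalent one-dimensional slab barrier), followed by DiBenedetto's interior $C^{1,\alpha}$ theory and induction on $k$. No gaps; the computation $-\Delta_p\Phi=\|f_{k-1}\|_{L^\infty(\Omega)}$ checks out since $(q-1)(p-1)=1$.
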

\begin{proof}
It suffices to verify the claim for $k=1$; the case $k\ge 2$ then follows from induction.  Recall that \eqref{IterationScheme} implies $u_1\in W^{1,p}_0(\Omega)$ is a weak solution of
 solution of 
$$
\begin{cases}
-\Delta_p u_1=|u_0|^{p-2}u_0, \quad & x\in \Omega, \\
\hspace{.34in} u_1=0, \quad &x\in \partial \Omega.
\end{cases}
$$
We will use a weak comparison principle argument to bound $u_1$ from above and then from below. The regularity theory developed by E. DiBenedetto in
\cite{DB} would then imply the existence of an $\alpha_1\in (0,1)$ such that $u_1\in C^{1,\alpha_1}_{\text{loc}}(\Omega)$.

\par To this end, we fix any $y\notin \overline{\Omega}$ and define 
$$
w(x):=\frac{1}{qn^{\frac{1}{p-1}}}|x-y|^q, \quad x\in \overline{\Omega}.
$$
Here $q=p/(p-1)$ is the H\"{o}lder exponent dual to $p$. Direct computation has $\Delta_pw(x)=1$ for each $x\in \Omega$. 
It is also routine to verify that
$$
v:=|u_0|_{L^\infty(\Omega)}\left(|w|_{L^\infty(\Omega)}-w\right)
$$
satisfies 
$$
-\Delta_pv\ge |u_0|^{p-2}u_0, \quad x\in \Omega.
$$
Since $v|_{\partial\Omega}\ge 0=u_1|_{\partial\Omega}$, a standard weak comparison argument implies $u_1\le v$ in $\Omega$. In particular, 
$$
u_1\le |w|_{L^\infty(\Omega)}|u_0|_{L^\infty(\Omega)}, \quad x\in \Omega.
$$
We can argue similarly to bound $u$ from below and derive 
$$
u_1\ge -|w|_{L^\infty(\Omega)}|u_0|_{L^\infty(\Omega)}, \quad x\in \Omega.
$$
\end{proof}
We have just established that the solution sequence $(u_k)_{k\in \N}$ of the inverse iteration scheme is continuous, provided that $u_0$ is continuous. 
Virtually the same argument given by P. Juutinen, P. Lindqvist and J. Manfredi in the proof of Theorem 2.5 of \cite{JLM2} implies that each $u_k$ is additionally a 
viscosity solution of \eqref{IterationScheme}. That is, each solution sequence $(u_k)_{k\in \N}\subset C(\overline{\Omega})$ of \eqref{IterationScheme} with $p\ge 2$ has the following property. For each $k\in \N$,
$$
-\Delta_p\phi(x_0)\le |u_{k-1}(x_0)|^{p-2}u_{k-1}(x_0)
$$
whenever $\phi\in C^2(\Omega)$ and $u_k-\phi$ has a local maximum at $x_0\in \Omega$, and 
$$
-\Delta_p\phi(x_0)\ge |u_{k-1}(x_0)|^{p-2}u_{k-1}(x_0)
$$
whenever $\phi\in C^2(\Omega)$ and $u_k-\phi$ has a local minimum at $x_0\in \Omega$. We refer interested readers to the ``user's guide to viscosity solutions" \cite{CIL} for more information on viscosity solutions of elliptic PDE, and we are now ready to prove Theorem \ref{Thm2}. 
\begin{proof}[~Proof of Theorem \ref{Thm2} part $(i)$]  Employing 
 Lemma \ref{DecreaseW1p} and inequality \eqref{ThreeHalfsIneq} for $k=1$ gives
$$
|Du_{k,p}|_{L^p(\Omega)}\le \frac{1}{\mu_p^{k-1}}|Du_{1,p}|_{L^p(\Omega)}\le\frac{1}{\mu_p^{k-1+1/p}}|u_{0}|_{L^p(\Omega)}\le
\frac{|\Omega|^{1/p}}{\mu_p^{k-1+1/p}}|u_{0}|_{L^\I(\Omega)}.
$$
Assume $p_0>n$. For $p>p_0$, we can use the above inequality with H\"{o}lder's inequality to get
$$
|Du_{k,p}|_{L^{p_0}(\Omega)}\le |\Omega|^{\frac{1}{p_0}-\frac{1}{p}}|Du_{k,p}|_{L^p(\Omega)}\le
 \frac{|\Omega|^{1/p_0}}{\mu_p^{k-1+1/p}}|u_{0}|_{L^\I(\Omega)}.
$$
By Morrey's inequality and $\lim_{p\rightarrow\infty}\mu_p=\lambda_\infty$, 
$$
(u_{k,p})_{p>p_0}\subset C^{1-n/p_0}(\Omega)
$$
is bounded for each $k\in \N$.  Therefore, the Arzel\`{a}-Ascoli Theorem and a typical diagonalization argument implies there is a sequence 
$(v_k)_{k\in \N}\subset C^{1-n/p_0}(\Omega)$ and a sequence of positive numbers $(p_j)_{j\in \N}$ that are increasing and unbounded such that
$$
v_k=\lim_{j\rightarrow \infty}u_{k,p_j} 
$$
in $C^{1-n/p_0}(\Omega)$ for each $k\in \N$.  

\par Now let $p>r$, and employ H\"{o}lder's inequality and \eqref{ThreeHalfsIneq} to get 
\begin{align*}
\left(\frac{1}{|\Omega|}\int_\Omega|Du_{k,p}|^rdx\right)^{1/r}&\le \left(\frac{1}{|\Omega|}\int_\Omega|Du_{k,p}|^pdx\right)^{1/p}\\
&\le\left(\frac{1}{|\Omega|}\frac{1}{\mu_p}\int_\Omega|u_{k-1,p}|^pdx\right)^{1/p}\\
&\le \frac{1}{\mu_p^{1/p}}|u_{k-1,p}|_{L^\infty(\Omega)}.
\end{align*}
The sequence $(u_{k,p_j})_{j\ge j_r}$ is then bounded in $W^{1,r}_0(\Omega)$ for some $j_r\in \N$ large enough and thus converges to $v_k$ weakly. Therefore, we can substitute $p=p_j$ above and send $j\rightarrow\infty$ to arrive at
$$
\left(\frac{1}{|\Omega|}\int_\Omega|Dv_{k}|^rdx\right)^{1/r}\le |v_{k-1}|_{L^\infty(\Omega)}.
$$
for each $k\in \N$. And after sending $r\rightarrow \infty$,
\begin{equation}\label{MuYImporTante}
|Dv_k|_{L^\infty(\Omega)}\le |v_{k-1}|_{L^\infty(\Omega)}.
\end{equation}
In particular, we have verified that $(v_k)_{k\in\N}\subset W^{1,\infty}_0(\Omega)$.

\par We will now verify that $v_k$ are viscosity solutions of the iteration scheme \eqref{InfinityIterationScheme}.  By induction, 
it suffices to prove this for $k=1$.  Assume $\phi\in C^2(\Omega)$ and $v_1-\phi$ has a local maximum at $x_0\in \Omega$.
We aim to show, 
\begin{equation}\label{ViscSolnCond}
0\ge 
\begin{cases}
\min\{-\Delta_\infty \phi(x_0), |D \phi(x_0)| - u_0(x_0)\}, \quad &u_0(x_0)>0,\\
-\Delta_\infty \phi(x_0), \quad &u_{0}(x_0)=0,\\
\max\{-\Delta_\infty \phi(x_0), -|D\phi(x_0)| - u_{0}(x_0)\}, \quad &u_{0}(x_0)<0.\\
\end{cases}
\end{equation}

\par After adding $x\mapsto \frac{\rho}{2}|x-x_0|^2$ to $\phi$ and later sending $\rho\rightarrow 0^+$, we may assume that $v_1-\phi$ has a {\it strict} local maximum.  Since $u_{1,p_j}$ converges to $v_1$ uniformly on $\Omega$, there is a sequence $(x_j)_{j\in \N}\subset \Omega$ converging to 
$x_0$ for which $u_{1,p_j}-\phi$ has a local maximum at $x_j$. Since $u_{1,p_j}$ is a viscosity solution of \eqref{IterationScheme} with $k=1$ and 
$p=p_j$, 
\begin{equation}\label{FirstThm2Ineq}
-\Delta_{p_j}\phi(x_j)\le |u_0(x_j)|^{p_j-2}u_0(x_j).
\end{equation}
\par If $u_0(x_0)<0$, then $u_0(x_j)<0$ for all $j$ sufficiently large. By \eqref{FirstThm2Ineq}, 
\begin{equation}\label{SecThm2Ineq}
-\Delta_{p_j}\phi(x_j)=|D\phi(x_j)|^{p_j-4}\left\{|D\phi(x_j)|^2\Delta \phi(x_j)+(p_j-2)\Delta_\infty\phi(x_j)\right\}<0,
\end{equation}
and thus $|D\phi(x_j)|\neq 0$ all large enough $j\in \N$.  Canceling the factor of $|D\phi(x_j)|^{p_j-4}$ in \eqref{SecThm2Ineq}, dividing 
by $p_j-2$ and sending $j\rightarrow\infty$ gives  $-\Delta_\infty\phi(x_0)\le 0$.  Likewise, rearranging \eqref{FirstThm2Ineq} leads to
\begin{equation}\label{ThirThm2Ineq}
-\frac{|D\phi(x_j)|^2\Delta \phi(x_j)}{p_j-2}-\Delta_\infty\phi(x_j)\le \frac{1}{p_j-2}\left(\frac{|u_0(x_j)|}{|D\phi(x_j)|}\right)^{p_j-4}u_0(x_j)^3.
\end{equation}
Therefore, it must also be that $-u_0(x_j)\le |D\phi(x_j)|$ for all $j$ large enough. Hence, \eqref{ViscSolnCond} holds when $u_0(x_0)<0$. 

\par Now suppose $u_0(x_0)=0$. If additionally, $|D\phi(x_0)|=0$, then clearly $-\Delta_\infty\phi(x_0)\le 0$. If $|D\phi(x_0)|\neq 0$, 
we can send $j\rightarrow \infty$ in \eqref{ThirThm2Ineq} to again arrive at $-\Delta_\infty\phi(x_0)\le 0$. Thus, 
\eqref{ViscSolnCond} holds when $u_0(x_0)=0$. 

\par Finally, let us assume that $u_0(x_0)>0$, and that $|D\phi(x_0)|-u_0(x_0)>0$. Then $|D\phi(x_j)|-u_0(x_j)>0$ for all $j\in \N$ 
sufficiently large. Passing to the limit in \eqref{ThirThm2Ineq} again gives $-\Delta_\infty\phi(x_0)\le 0$. In conclusion, \eqref{ViscSolnCond} holds in the case $u_0(x_0)>0$, as well. Therefore, we have verified that $v_1$ is a viscosity subsolution of 
\eqref{InfinityIterationScheme}.  An argument that shows $v_1$ is additionally a viscosity supersolution of 
\eqref{InfinityIterationScheme} can be made similarly, so we leave the details to the reader. 
\end{proof}
\begin{proof}[~Proof of Theorem \ref{Thm2} part $(ii)$] In view of \eqref{MuYImporTante}, 
$$
|Dv_k|_{L^\infty(\Omega)}\le |v_{k-1}|_{L^\infty(\Omega)}\le\frac{1}{\lambda_\infty}|Dv_{k-1}|_{L^\infty(\Omega)}.
$$
Therefore, the sequence $(\lambda_\infty^k|Dv_k|_{L^\infty(\Omega)})_{k\in\N}$ is nonincreasing, and the limit 
$$
L:=\lim_{k\rightarrow\infty}\lambda_\infty^k|Dv_k|_{L^\infty(\Omega)}
$$
exists.  The inequality \eqref{MuYImporTante} also implies 
$$
|v_k|_{L^\infty(\Omega)}\le \frac{1}{\lambda_\infty}|Dv_{k}|_{L^\infty(\Omega)}\le\frac{1}{\lambda_\infty}|v_{k-1}|_{L^\infty(\Omega)}.
$$
Consequently, $(\lambda_\infty^k|v_k|_{L^\infty(\Omega)})_{k\in\N}$ is nonincreasing
and the limit 
$$
M:=\lim_{k\rightarrow\infty}\lambda_\infty^k|v_k|_{L^\infty(\Omega)}
$$
exists, as well.  

\par Observe $\lambda_\infty^k|Dv_k|_{L^\infty(\Omega)}\le \lambda_\infty\left(\lambda_\infty^{k-1}|v_{k-1}|_{L^\infty(\Omega)}\right)$ so that
$$
L\le \lambda_\infty M.
$$
Moreover, $\lambda_\infty^k|v_k|_{L^\infty(\Omega)}\le \frac{1}{\lambda_\infty}\lambda_\infty^k|Dv_{k}|_{L^\infty(\Omega)}$, which implies 
$$
\lambda_\infty M\le L.
$$
Thus, $\lambda_\infty M= L$, and when this quantity is nonzero,
$$
\lambda_\infty=\lim_{k\rightarrow\infty}\frac{|Dv_{k}|_{L^\infty(\Omega)}}{|v_{k}|_{L^\infty(\Omega)}}.
$$

\par Finally, note that that the sequence $(w_k)_{k\in\N}:=(\lambda_\infty^{k}v_{k})_{k\in\N}\subset W^{1,\infty}_0(\Omega)$ satisfies the iteration scheme
$$0=
\begin{cases}
\min\{-\Delta_\infty w_k, |Dw_k| - \lambda_\infty w_{k-1}\}, \quad &w_{k-1}>0\\
-\Delta_\infty w_k, \quad &w_{k-1}=0\\
\max\{-\Delta_\infty w_k, -|Dw_k| - \lambda_\infty w_{k-1}\}, \quad &w_{k-1}<0\\
\end{cases} 
$$
in the sense of viscosity solutions. Therefore, if a subsequence of $(\lambda_\infty^{k}v_{k})_{k\in\N}$ converges uniformly on $\Omega$, the stability of viscosity solutions implies that the limit function is necessarily a solution of \eqref{InfinityGroundStates}.
\end{proof}

\appendix


\end{document}